\newtheorem{teo}{Theorem}
\newtheorem{oss}{Remark}
\theoremstyle{remark}
\newtheorem{es}{\textbf{Example}}
\title{Squaring the magic squares of order 4}
\author{Stefano Barbero, Umberto Cerruti, Nadir Murru\\
Department of Mathematics\\
University of Turin}
\date{}
\begin{document}
\maketitle

\begin{abstract}
In this paper, we present the problem of counting magic squares and we focus on the case of multiplicative magic squares of order 4. We give the exact number of normal multiplicative magic squares of order 4 with an original and complete proof, pointing out the role of the action of the symmetric group. Moreover, we provide a new representation for magic squares of order 4. Such representation allows the construction of magic squares in a very simple way, using essentially only five particular $4\times 4$ matrices .
\end{abstract}

\section{Introduction}
A magic square is defined as an $n\times n$ matrix of integers, where the sum of the numbers in each \emph{line} (i.e., in each row, in each column and in each diagonal) is the same. Magic squares have a very rich history (see, e.g., the beautiful book of Descombes \cite{Des}) and they have many generalizations. There are a lot of contributes to their theory from several different fields (see, e.g., the classical book of Andrews \cite{Andrews}). \\
A very difficult job is counting how many magic squares there are for a given order and a given line sum, but if we relax the requirements and we ask for the so--called semi--magic squares (i.e., magic squares without the condition on the diagonals), the counting is easier. For order 3, Bona \cite{Bona}  has found that the number of semi--magic squares is 
$$\binom{r+4}{4}+\binom{r+3}{4}+\binom{r+2}{4}\ ,$$ 
where $r$ is the line sum (let us observe that in \cite{Bona} semi--magic squares are called magic squares). For a general theory of counting semi--magic squares you can see \cite{Bona2}, chapter 9.\\
The classic magic squares, which are called \emph{normal}, are $n\times n$ matrices whose entries consist of the numbers $0,1,...,n^2-1$ and in each line the sum of the numbers is the same. Their number is known for the orders 3, 4 and 5. The case of order 4 is especially interesting. There are exactly 880 normal magic squares of order 4 (up to symmetries of the square, i.e., in total they are $880\times8=7040$). They were enumerated for the first time in 1693 by Frenicle de Bessy. The Frenicle method was analytically expanded and completed by Bondi and Ollerenshaw \cite{Bondi}. Of course, one can think to different kinds of magic squares, only changing the operation. Then a \emph{multiplicative magic square} is an $n\times n$ matrix of integers, where the product of the numbers in each line is the same. This is not a new idea, for example multiplicative semi--magic squares of order 3 are studied in \cite{Fried}. There are several different methods to construct multiplicative magic squares. The most obvious method is to transform an additive magic square $A=(a_{ij})$ into $M=(b^{a_{ij}})$, for any base $b$. More intriguing ways are explained in \cite{Des} (multiplicative magic squares are usually constructed by using orthogonal latin squares or geometric progressions).\\ 
In \cite{Hahn}, the problem of finding the exact number of multiplicative magic squares of order 4 has been posed. In order to solve such question, in \cite{Libis} a correspondence between additive and multiplicative magic squares is proposed. The authors claimed that such function is an isomorphism even though it is only an injective morphism, as pointed out in \cite{letter}. Since such revision is only a note, the above problem is not completely clarified and the correct number of multiplicative magic squares is given without a rigorous proof. In the next section, we clarify the above questions, proposing an original and complete proof. Moreover, in the last section a novel representation for some magic squares is proposed, allowing an easy way to construct them.\\
\section{Counting multiplicative magic squares of order 4}
A \emph{normal multiplicative magic square} $n\times n$ is created using $n$ primes $p_1,...,p_n$. All the divisors of $k=p_1\cdots p_n$ must appear once time in the matrix and in each line the product is the same. The magic constant is $k^2$ and it is well--known that in a normal multiplicative magic square, in each line, any prime compares with power one and exactly two times. For example, a normal multiplicative magic square of order 4 is
$$\begin{pmatrix} 1 & p_3 \cdot p_4 & p_1\cdot p_2 & p_1\cdot p_2 \cdot p_3 \cdot p_4 \cr p_1\cdot p_3 \cdot p_4 & p_1\cdot p_2\cdot p_3 & p_4 & p_2 \cr p_1\cdot p_2\cdot p_4 & p_1 & p_2 \cdot p_3 \cdot p_4 & p_3 \cr p_2 \cdot p_3 & p_2\cdot p_4 & p_1\cdot p_3 & p_1 \cdot p_4 \end{pmatrix}.$$
In the following, we use $\mathbb M_{k,n}$ to indicate the set of normal multiplicative magic squares $n\times n$ with magic constant $k^2$. An hard problem to solve is to find the order of $\mathbb M_{k,4}$, as posed in \cite{Hahn}. In order to answer to this question, $\mathbb M_{k,4}$ is made in correspondence with the set of \emph{normal additive magic squares} $4\times 4$ \cite{Libis}.\\ 
A \emph{normal additive magic square} is a matrix $n \times n$, whose entries consist of the numbers $0,1,...,n^2-1$ and in each line the sum of the numbers is the same. The magic constant is $\frac{n(n^2-1)}{2}$. From now on, we name $\mathbb A_n$ the set of normal additive magic squares $n\times n$.\\
It is well--known that the order of $\mathbb A_4$ is 7040, or 880 up to symmetries of the square \cite{Bondi}. Can we find a similar result for $\mathbb M_{k,4}$?\\
In \cite{Libis} the following correspondence is defined:
$$f:\mathbb M_{k,n}\rightarrow \mathbb A_{2^{n/2}}.$$
If $f$ is bijective, $\lvert \mathbb M_{k,4} \rvert$ is determined. The function $f$ maps $M=(m_{ij})\in \mathbb M_{k,n}$ into $A=(a_{ij})\in \mathbb A_{2^{n/2}}$. The element $m_{ij}$ is made in correspondence with $a_{ij}$, where $a_{ij}$ is the base 10 number , between 0 and $n^2-1$, whose base 2 expression is the $n$--string associated to $m_{ij}$. This string is constructed as follows: arrange the $n$ prime numbers in ascending order; if $p_k$ is a factor of $m_{ij}$ place a 1 in the $k$th position of the string ; if not, place a 0.\\
Such a function $f$ is surely injective, but unfortunately it is not surjective and the order of $\mathbb M_{k,4}$ is not $\lvert \mathbb A_4 \rvert=7040$. Indeed, we can consider the group $S_4$ which permutes the four primes involved in $\mathbb M_{k,4}$.
\begin{teo}
The order of $S_4$ divides the order of $\mathbb M_{k,4}$.
\end{teo}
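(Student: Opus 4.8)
The plan is to realise $S_4$ as a group of permutations acting on the set $\mathbb M_{k,4}$, to check that this action is \emph{free}, and then to invoke Lagrange's theorem (via the orbit--stabilizer theorem) so that every orbit has size exactly $|S_4|$, forcing $|S_4|$ to divide $|\mathbb M_{k,4}|$.

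First I would define the action. Fix the four primes $p_1<p_2<p_3<p_4$ used in $\mathbb M_{k,4}$, so $k=p_1p_2p_3p_4$ and every entry of any $M=(m_{ij})\in\mathbb M_{k,4}$ is a (squarefree) divisor of $k$. Given $\sigma\in S_4$, let $\sigma\cdot M$ be the matrix obtained from $M$ by replacing, inside every entry, each factor $p_t$ with $p_{\sigma(t)}$. Since $k$ is symmetric in the four primes, the magic constant $k^2$ is unchanged; since $\sigma$ induces a bijection of the set of divisors of $k$, the entries of $\sigma\cdot M$ are again exactly those divisors, each appearing once; and since each line product (rows, columns, diagonals) is a formal product of the prime pattern, it too is preserved. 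Hence $\sigma\cdot M\in\mathbb M_{k,4}$, and the identities $(\sigma\tau)\cdot M=\sigma\cdot(\tau\cdot M)$ and $\mathrm{id}\cdot M=M$ are immediate, so this is a genuine left action of $S_4$ on $\mathbb M_{k,4}$.

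The one point that requires care is that the action is free, i.e.\ that $\sigma\cdot M=M$ forces $\sigma=\mathrm{id}$. Here normality enters essentially: because $M$ is normal, every divisor of $k$ — in particular each single prime $p_t$ — occurs exactly once as an entry, say $m_{i_tj_t}=p_t$. Comparing that entry in $M$ with the same entry in $\sigma\cdot M$ gives $p_{\sigma(t)}=p_t$, whence $\sigma(t)=t$ for every $t\in\{1,2,3,4\}$, so $\sigma=\mathrm{id}$. (This is the only step with any content; everything else is bookkeeping.)

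Finally, since the action is free, the orbit--stabilizer theorem gives $|\mathcal O|=|S_4|/|\mathrm{Stab}(M)|=|S_4|=24$ for every orbit $\mathcal O$. As $\mathbb M_{k,4}$ is the disjoint union of its $S_4$-orbits, we conclude that $24=|S_4|$ divides $|\mathbb M_{k,4}|$, which is the assertion. The same argument, with $S_n$ in place of $S_4$, shows more generally that $|S_n|$ divides $|\mathbb M_{k,n}|$.
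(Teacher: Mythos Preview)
Your proof is correct and follows essentially the same route as the paper: act by $S_4$ via permutation of the four primes, observe that no nontrivial permutation fixes any square, and conclude that every orbit has size $24$, so $|S_4|\mid|\mathbb M_{k,4}|$. Your write-up is in fact more careful than the paper's---you justify freeness explicitly by tracking the entry equal to the bare prime $p_t$, and you cite the orbit--stabilizer theorem where the paper invokes Burnside's Lemma, but the underlying argument is the same.
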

\begin{proof}
Surely when a permutation $\rho \in S_4$ acts on $M\in\mathbb M_{k,4}$, then $\rho(M)$ is still an element in $\mathbb M_{k,4}$. Furthermore, if $\rho$ is not the identity, it does not fix any element of $\mathbb M_{k,4}$, i.e., any orbit has 24 elements. Therefore by Burnside Lemma $\lvert S_4 \rvert$ must be a divisor of $\lvert \mathbb M_{k,4} \rvert$.
\end{proof}
As immediate consequence of the previous Theorem, 7040 can not be the order of $\mathbb M_{k,4}$, since 24 does not divide 7040. The correct answer is given in the following theorem.
\begin{teo}
The order of $\mathbb M_{k,4}$ is 4224.
\end{teo}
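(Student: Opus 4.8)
The plan is to turn the count into a count of the image of $f$. Since $f:\mathbb M_{k,4}\to\mathbb A_4$ is injective, $\lvert\mathbb M_{k,4}\rvert=\lvert f(\mathbb M_{k,4})\rvert$, so it suffices to describe $f(\mathbb M_{k,4})\subseteq\mathbb A_4$ intrinsically and to count it. For $M=(m_{ij})\in\mathbb M_{k,4}$ with primes $p_1<p_2<p_3<p_4$, let $B^{(k)}$ be the $0/1$ matrix with $B^{(k)}_{ij}=1$ iff $p_k\mid m_{ij}$; then $f(M)=\sum_{k=1}^{4}2^{4-k}B^{(k)}$, and because in a normal multiplicative magic square each prime occurs with exponent $1$ exactly twice on every line, each $B^{(k)}$ has all ten line sums (four rows, four columns, two diagonals) equal to $2$. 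Conversely, $A\in\mathbb A_4$ lies in $f(\mathbb M_{k,4})$ precisely when all four of its ``bit matrices'' have this property — call such an $A$ \emph{balanced} — where one also uses that ``$A$ contains $0,\dots,15$ once each'' is equivalent to ``the four bit matrices jointly separate the sixteen cells'', i.e. to ``all sixteen divisors occur''. The magic condition alone does not force balance: the equation $\sum_k 2^{4-k}s_k=30$ with $0\le s_k\le 4$ has the balanced solution $s_k=2$ but also others, e.g. $(s_1,s_2,s_3,s_4)=(3,0,1,4)$, which is exactly why $f$ is not onto and $\lvert\mathbb M_{k,4}\rvert\neq 7040$.

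Next I would cut the problem down to a finite check. Balance is preserved by the eight symmetries of the square, since these permute the ten lines among themselves, and by complementation $a_{ij}\mapsto 15-a_{ij}$, which replaces each $B^{(k)}$ by $J-B^{(k)}$ and so keeps all line sums equal to $2$; both operations send a normal additive magic square to a normal additive magic square. Hence $f(\mathbb M_{k,4})$ is a union of orbits of the order-$16$ group generated by the symmetries of the square and complementation, and it is enough to test balance on the $880$ Frenicle representatives and then tally orbit sizes. Moreover, on $f(\mathbb M_{k,4})$ — though not on all of $\mathbb A_4$ — there is an additional $S_4$-action coming, via $f$, from the permutation of the primes (Theorem 1): it permutes the four bit matrices and therefore preserves balance. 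This equips $f(\mathbb M_{k,4})$ with an action of a group of order $8\cdot 24\cdot 2=384$, which one can use to organize the enumeration; the one subtlety is that the orbits do not all have the same length — the associative magic squares, fixed by complementation composed with the central symmetry, have shorter orbits — so a naive division by $384$ is not allowed and a careful orbit count (or a Burnside argument) is needed.

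Finally I would carry out the enumeration. One concrete route: classify the $0/1$ $4\times4$ matrices that are $2$-regular on all ten lines (they come in complementary pairs and there are relatively few), and then count the ordered quadruples of them that separate the sixteen cells. An equivalent route: run the balance test across the known classification of $\mathbb A_4$ into its twelve types, splitting into cases according to where the eight complementary pairs $\{a,15-a\}$ sit in the square. Either way one finds that exactly $4224$ of the $7040$ normal additive magic squares are balanced, whence $\lvert\mathbb M_{k,4}\rvert=4224$; as a consistency check $24=\lvert S_4\rvert$ divides $4224$, as Theorem 1 demands, and this reconfirms that $f$ is not surjective. The hard part is precisely this last step: to \emph{prove}, not merely verify, that this exact number of magic squares lift. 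That requires either a genuinely exhaustive and carefully justified combinatorial case analysis on the line patterns of the four bit matrices, or an explicit and explicitly checkable computation over the classes of $\mathbb A_4$ — with the non-free part of the symmetry action (the associative squares) being the delicate bookkeeping point along the way.
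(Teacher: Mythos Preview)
Your reduction is right and matches the paper's: $f$ is injective, and $A\in\mathbb A_4$ lies in $f(\mathbb M_{k,4})$ exactly when each of the four bit matrices $B^{(k)}$ has every line sum equal to $2$. What you call ``balanced'' is precisely what the paper isolates. The symmetry observations (the $D_8$- and complementation-stability of balance, and the $S_4$-action inherited from prime permutation) are correct and pleasant, but they are not used in the paper's argument, and on their own they do not pin down the number.

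The genuine gap is the one you yourself flag: you do not carry out the enumeration. The paper does, and the key step you are missing is a concrete \emph{line-pattern} analysis that turns ``balanced'' into a recognizable subclass of $\mathbb A_4$. Write each entry in base $4$; the magic sum $30$ forces, on every line, the pair of base-$4$ digit sums to satisfy $4s+t=30$, so $(s,t)\in\{(7,2),(6,6),(5,10)\}$. Going down to base $2$, only the $(6,6)$ case is compatible with each $B^{(k)}$ summing to $2$ on that line, and within $(6,6)$ only the three multisets $\{0,1,2,3\}$, $\{0,0,3,3\}$, $\{1,1,2,2\}$ of low base-$4$ digits survive (and, symmetrically, the same three for the high digit). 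This pins down $f(\mathbb M_{k,4})$ as exactly the squares whose every line has one of these three digit patterns in each base-$4$ position. The paper then identifies this set with ``category one'' of the Bondi--Ollerenshaw classification of $\mathbb A_4$, which has $528$ members up to the eight symmetries, giving $528\cdot 8=4224$. So the missing idea is not more group theory but this base-$4$ digit analysis together with the citation of the relevant Bondi--Ollerenshaw category; without that identification (or an equivalent exhaustive count), your outline stops short of a proof.
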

\begin{proof}
If we consider $A\in \mathbb A_{4}$ and we write its entries in base 4, then in each line the sum of the digits in the first position multiplied by 4 and added to the sum of the digits in the second position must yield 30. The only possible combinations are
$$30=7\times4+2=6\times4+6=5\times4+10.$$
So if $A$ has a line in which the digits in the second position have sum 2, we are in the only two situations
\begin{eqnarray*}
\cdot0 \quad \cdot0 \quad \cdot1\quad \cdot1\\
 \cdot0 \quad \cdot0 \quad \cdot0 \quad \cdot2\\
\end{eqnarray*}
where $\cdot$ indicates the first position of the number in base 4. Considering the representation in base 2 of these strings, we have one of the following situations
\begin{eqnarray*}
\cdot\cdot00 \quad \cdot\cdot00 \quad \cdot\cdot01 \quad \cdot\cdot01 \\
\cdot\cdot00 \quad \cdot\cdot00 \quad \cdot\cdot00 \quad \cdot\cdot10 &.&\\
\end{eqnarray*}
Now if we try to create $M\in\mathbb M_{k,4}$, using the inverse of $f$, if $A$ has a line which presents one of these situations we can not have a normal multiplicative magic square. In fact, if we are in the first situation, for example, then the prime $p_3$ will not appear in this line. Similarly if a line of $A$ has digits in the second position with sum 10, the possible cases are
\begin{eqnarray*}
\cdot3 \quad \cdot3 \quad \cdot3 \quad \cdot1 \\
\cdot2 \quad \cdot2 \quad \cdot3 \quad \cdot3 \\
\end{eqnarray*}
which correspond in base 2 to the strings
\begin{eqnarray*}
\cdot\cdot11 \quad \cdot\cdot11 \quad \cdot\cdot11 \quad \cdot\cdot01\\
\cdot\cdot10 \quad \cdot\cdot10 \quad \cdot\cdot11 \quad \cdot\cdot11 &.& \\
\end{eqnarray*}
Once more, if we try, we fail to obtain $M\in \mathbb M_{k,4}$\ . \\
Finally, if $A$ has a line whose digits in the second position have sum 6, we have the possibilities
\begin{eqnarray*}
\cdot0 \quad \cdot1 \quad \cdot2 \quad \cdot3\\
\cdot0 \quad \cdot0 \quad \cdot3 \quad \cdot3\\
\cdot1 \quad \cdot1 \quad \cdot2 \quad \cdot2\\
\cdot0 \quad \cdot2 \quad \cdot2 \quad \cdot2\\
\cdot1 \quad \cdot1 \quad \cdot1 \quad \cdot3 &.&\\
\end{eqnarray*}
With similar arguments as the ones used before, it is easy to see that the last two situations do not allow to obtain a normal multiplicative magic square. Therefore the squares $A\in\mathbb A_4$ from which we obtain a normal multiplicative magic square are only those with lines whose entries in base 4 have digits in the first and second position composed by strings 
\begin{eqnarray*}
\cdot0 \quad \cdot1 \quad \cdot2 \quad \cdot3\\
\cdot0 \quad \cdot0 \quad \cdot3 \quad \cdot3\\
\cdot1 \quad \cdot1 \quad \cdot2 \quad \cdot2&.&\\
\end{eqnarray*}

In \cite{Bondi}, all the squares in $\mathbb A_4$ are classified. The squares in $\mathbb A_4$ generated using only these strings, i.e., which correspond to squares in $\mathbb M_{k,4}$, are only those in category one (\cite{Bondi}, p. 510). They are exactly 528 unless of symmetries of the square. Therefore our arguments and the injectivity of $f$ allow us to conclude that the order of $\mathbb M_{k,4}$ is $528\times8=4224$.
\end{proof}

\section{A new representation for magic squares}
In this section we see a representation for normal additive magic squares which correspond to normal multiplicative ones.\\ 
In \cite{Flo} a lower bound for the distance between the maximal and minimal element in a multiplicative magic square is given. In order to do that, the relation between additive and multiplicative magic squares is highlighted, recalling that a multiplicative magic square can be factorized as $\prod_i{p_i^{A_i}}$, where $A_i$'s are additive magic squares. Moreover, focusing on magic squares of order 4, in \cite{Flo} the Hilbert basis (composed by 20 magic squares) for such magic squares is explicited. However, such representation and the relation between additive and multiplicative magic squares are not really manageable in order to construct additive and multiplicative magic squares of order 4. Here, the proposed representation allows to determine all (and not only) the normal additive magic squares of order 4, which corresponds to normal multiplicative magic squares. In this way, they can all be easily constructed essentially using only 5 basic matrices.\\
We consider $M=(m_{ij})\in\mathbb M_{k,4}$, by means of $f$ we have the correspondent $A=(a_{ij})\in\mathbb A_4$ and we consider its entries in base 2. Now we decompose $A$ into four matrices $A_1, A_2, A_3, A_4$, whose entries are only 0 or 1, so that the entries in position $ij$ of the matrices $A_1$, $A_2$, $A_3$, $A_4$ form the string $a_{ij}$.
\begin{es}
From the normal multiplicative magic square 
$$\begin{pmatrix}  p_1p_2p_3 & p_3p_4 & 1 & p_1p_2p_4 \cr p_2p_4 & p_1 & p_1p_3p_4 & p_2p_3 \cr p_1p_4 & p_2 & p_2p_3p_4 & p_1p_3 \cr p_3 & p_1p_2p_3p_4 & p_1p_2 & p_4  \end{pmatrix},$$
using $f$, we obtain the normal additive magic square
$$\begin{pmatrix}  1110 & 0011 & 0000 & 1101 \cr 0101 & 1000 & 1011 & 0110 \cr 1001 & 0100 & 0111 & 1010 \cr 0010 & 1111 & 1100 & 0001  \end{pmatrix}=\begin{pmatrix}  14 & 3 & 0 & 13 \cr 5 & 8 & 11 &6 \cr 9 & 4 & 7 & 10 \cr 2 & 15 & 12 & 1  \end{pmatrix}$$
and it can be decomposed into
$$8\begin{pmatrix}  1 & 0 & 0 & 1 \cr 0 & 1 & 1 & 0 \cr 1 & 0 & 0 & 1 \cr 0 & 1 & 1 & 0  \end{pmatrix}+4\begin{pmatrix}  1 & 0 & 0 & 1 \cr 1 & 0 & 0 & 1 \cr 0 & 1 & 1 & 0 \cr 0 & 1 & 1 & 0  \end{pmatrix}+2\begin{pmatrix}  1 & 1 & 0 & 0 \cr 0 & 0 & 1 & 1 \cr 0 & 0 & 1 & 1 \cr 1 & 1 & 0 & 0  \end{pmatrix}+\begin{pmatrix}  0 & 1 & 0 & 1 \cr 1 & 0 & 1 & 0 \cr 1 & 0 & 1 & 0 \cr 0 & 1 & 0 & 1  \end{pmatrix}.$$
\end{es}
Since $A$ is derived from a matrix in $\mathbb M_{k,4}$, by means of $f$, the matrices $A_1, A_2, A_3, A_4$ are all and only those having in each line exactly two ones. We call \emph{forms} these matrices which allow us to construct any magic square in $\mathbb A_4$ corresponding to magic squares in $\mathbb M_{k,4}$. These forms are exactly 16. 
\begin{teo}
There are 16 different matrices $4\times4$, with entries 0 or 1, such that in each line there are exactly two ones.
\end{teo}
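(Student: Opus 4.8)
The plan is to turn the statement into a small piece of combinatorial bookkeeping and then finish by a short classification. First I would identify each matrix $M=(m_{ij})$ of the required type with the $4$-tuple $(R_1,R_2,R_3,R_4)$, where $R_i\subseteq\{1,2,3,4\}$ is the set of columns $j$ with $m_{ij}=1$. ``Two ones in each row'' is just $|R_i|=2$ for every $i$; ``two ones in each column'' says that every $j\in\{1,2,3,4\}$ lies in exactly two of the $R_i$; ``two ones on the main diagonal'' says that exactly two indices $i$ satisfy $i\in R_i$; and ``two ones on the anti-diagonal'' says that exactly two indices $i$ satisfy $5-i\in R_i$. So the task becomes: count the $4$-tuples of $2$-subsets of $\{1,2,3,4\}$ satisfying the column condition (call such tuples \emph{column-regular}) together with both diagonal conditions.

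Next I would classify the column-regular $4$-tuples, ignoring diagonals. The six $2$-subsets of $\{1,2,3,4\}$ split into three complementary pairs $\{A,\bar A\}$ with $A\cup\bar A=\{1,2,3,4\}$. A short argument on multiplicities (a subset occurring twice forces its complement to occur twice; otherwise all four subsets are distinct) shows that a column-regular multiset $\{R_1,\dots,R_4\}$ is of exactly one of two kinds: (I) two copies of $A$ and two copies of $\bar A$ for one complementary pair — three such multisets, each with $\binom{4}{2}=6$ orderings; or (II) the four distinct edges of a $4$-cycle of $K_4$, equivalently the union of two of the three complementary pairs — three such multisets, each with $4!=24$ orderings. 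As a consistency check, $18+72=90$, the known number of $0/1$ matrices with all row and column sums $2$.

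Then I would impose both diagonal conditions on each kind. For kind (I), encode row $i$ by a bit $x_i$ telling which of $A,\bar A$ it equals; column-regularity is $x_1+x_2+x_3+x_4=2$, and a direct computation shows that \emph{both} diagonal conditions reduce to the \emph{same} linear equation, of the form ``one fixed half-sum of the $x_i$ equals the complementary half-sum''. Together with $\sum x_i=2$ this forces each half-sum to be $1$, giving $2\times 2=4$ solutions, so kind (I) contributes $3\times 4=12$ matrices. For kind (II) I would treat the three $4$-cycles separately. For the cycle with edge set $\{\{1,2\},\{3,4\},\{1,3\},\{2,4\}\}$, each of its four sets meets each of the two ``diagonal pairs'' $\{1,4\}$ and $\{2,3\}$ (the pairs of special columns for rows $1,4$ and for rows $2,3$ respectively) in exactly one element, so for every ordering the quantity $\sum_i|R_i\cap\{i,5-i\}|$, which is the main-diagonal count plus the anti-diagonal count, equals $4$; hence ``main count $=2$'' already forces ``anti count $=2$'', and a short check of the $24$ orderings leaves exactly $4$. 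Each of the other two $4$-cycles contains both of the self-symmetric sets $\{1,4\}$ and $\{2,3\}$, so that sum is no longer constant, and a direct enumeration shows that every ordering with main-diagonal count $2$ has anti-diagonal count $0$ or $4$; thus these contribute nothing. Kind (II) therefore contributes $4$, and the total is $12+4=16$.

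The reformulation, the multiplicity classification, and the linear count for kind (I) are routine. The only genuinely fiddly point — and the one place where a single uniform symmetry argument fails — is kind (II): the three $4$-cycles behave differently because exactly one of them has an edge set containing neither of the two $2$-subsets fixed by the anti-diagonal involution $i\mapsto 5-i$, while the other two contain both. I expect this asymmetry, resolved by the short case-by-case check described above, to be the main obstacle.
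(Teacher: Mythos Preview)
Your argument is correct: the reformulation in terms of $2$-subsets $R_i$ is sound, the split into kinds (I) and (II) with the check $18+72=90$ is accurate, and your diagonal analysis goes through---for each complementary pair in kind (I) the main- and anti-diagonal constraints do coincide (giving $3\times 4=12$), and for kind (II) only the cycle $\{\{1,2\},\{3,4\},\{1,3\},\{2,4\}\}$ survives, contributing exactly $4$.

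The paper, however, takes a much shorter route. Rather than first producing all $90$ row/column-regular matrices and then filtering by the diagonals, it \emph{starts} from the main diagonal: there are only six possible $\{0,1\}$-strings with two ones to place there, and once the diagonal is fixed the remaining entries are almost forced. The key observation is a dichotomy on the endpoints of the diagonal string: if the $(1,1)$ and $(4,4)$ entries differ (four of the six strings) there are exactly two completions, while if they agree (two strings) there are four, giving $4\cdot 2+2\cdot 4=16$. This avoids the kind-(II) case analysis entirely. What your approach buys is a clean link to the classical count of $0/1$ matrices with row and column sums $2$, and a structural explanation (complementary pairs versus $4$-cycles) of \emph{which} $12+4$ of the $90$ survive; what the paper's approach buys is brevity, since fixing the diagonal first collapses most of the freedom at once.
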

\begin{proof}
The strings that we can use to generate these forms are only six:
$$1100,\quad 1010, \quad 1001,\quad 0110,\quad 0101, \quad 0011 \ .$$
If we choose as a diagonal a string with different extremes, then we have only two possible different forms. For example:
$$\begin{pmatrix} 1 & \cdot & \cdot & \cdot \cr \cdot & 1 & \cdot & \cdot \cr \cdot & \cdot & 0 & \cdot \cr \cdot & \cdot & \cdot & 0  \end{pmatrix}\rightarrow \begin{pmatrix} 1 & \cdot & \cdot & 0 \cr \cdot & 1 & \cdot & \cdot \cr \cdot & \cdot & 0 & \cdot \cr \cdot & \cdot & \cdot & 0  \end{pmatrix}\rightarrow\begin{pmatrix} 1 & 0 & 1 & 0 \cr 0 & 1 & 0 & 1 \cr 0 & 1 & 0 & 1 \cr 1 & 0 & 1 & 0  \end{pmatrix}$$
$$\begin{pmatrix} 1 & \cdot & \cdot & \cdot \cr \cdot & 1 & \cdot & \cdot \cr \cdot & \cdot & 0 & \cdot \cr \cdot & \cdot & \cdot & 0  \end{pmatrix}\rightarrow \begin{pmatrix} 1 & \cdot & \cdot & 1 \cr \cdot & 1 & \cdot & \cdot \cr \cdot & \cdot & 0 & \cdot \cr \cdot & \cdot & \cdot & 0  \end{pmatrix}\rightarrow\begin{pmatrix} 1 & 0 & 0 & 1 \cr 0 & 1 & 1 & 0 \cr 1 & 0 & 0 & 1 \cr 0 & 1 & 1 & 0  \end{pmatrix}\ .$$
On the other hand, if we choose as a diagonal a string with same extremes, then we have four possible forms. Since we have four strings with different extremes and two strings with same extremes, our forms are $4\cdot2+2\cdot4=16\ .$ 
\end{proof}
From these 16 forms we can individuate five \emph{fundamental forms}:
$$
\begin{array}{clllll}
A_0&=&\begin{pmatrix} 0 & 0 & 1 & 1 \cr 0 & 1 & 0 & 1 \cr 1 & 0 & 1 & 0 \cr 1 & 1 & 0 & 0  \end{pmatrix} B_0&=&\begin{pmatrix} 0 & 0 & 1 & 1 \cr 1 & 1 & 0 & 0 \cr 0 & 0 & 1 & 1 \cr 1 & 1 & 0 & 0  \end{pmatrix}\\ C_0&=&\begin{pmatrix} 0 & 0 & 1 & 1 \cr 1 & 1 & 0 & 0 \cr 1 & 1 & 0 & 0 \cr 0 & 0 & 1 & 1  \end{pmatrix} D_0&=&\begin{pmatrix} 0 & 1 & 0 & 1 \cr 1 & 0 & 1 & 0 \cr 1 & 0 & 1 & 0 \cr 0 & 1 & 0 & 1  \end{pmatrix}\\ E_0&=&\begin{pmatrix} 0 & 1 & 0 & 1 \cr 1 & 1 & 0 & 0 \cr 0 & 0 & 1 & 1 \cr 1 & 0 & 1 & 0  \end{pmatrix}.\\
\end{array}
$$
All the remaining can be found acting on these five forms with the group of symmetries of the square $D_8$.
Combining any four forms, as we have done in the previous example, the resulting matrix is always an additive magic square, not necessarily \emph{normal}, and it corresponds to a multiplicative one.
\begin{teo} f $A_1$, $A_2$, $A_3$, $A_4$ are fundamental forms, or matrices obtained from fundamental forms through the action of $D_8$, then 
$$8A_1+4A_2+2A_3+A_4$$
is always an additive magic square which corresponds to a multiplicative magic square.
\end{teo}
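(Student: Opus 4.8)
The plan is to reduce the statement to two elementary observations: that every matrix admitted in the hypothesis is one of the 16 \emph{forms} of Theorem~3 (a matrix with entries in $\{0,1\}$ having exactly two ones in every line), and that, for such matrices, the additive magic property follows from the additivity of the line sum while the multiplicative one follows from the multiplicativity of the line product, with no case analysis needed. First I would fix notation: set $M=(m_{ij})$ with $m_{ij}=8(A_1)_{ij}+4(A_2)_{ij}+2(A_3)_{ij}+(A_4)_{ij}$, so that the binary expansion of $m_{ij}$ is exactly the $4$-string $(A_1)_{ij}(A_2)_{ij}(A_3)_{ij}(A_4)_{ij}$; then $f^{-1}$ (extended in the obvious way to non-normal squares, as in the Example) is defined on $M$ and produces $M'=(m'_{ij})$ with $m'_{ij}=\prod_{k=1}^{4}p_k^{(A_k)_{ij}}$.

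The first step is to verify that each $A_k$ is a form. The five fundamental forms $A_0,B_0,C_0,D_0,E_0$ are, by inspection, matrices with entries $0$ or $1$ having exactly two ones in each row, each column, and each of the two diagonals. Every element of $D_8$ acts on a $4\times4$ array by a symmetry of the square, and such a symmetry permutes the ten lines (four rows, four columns, two diagonals) among themselves; hence it carries a matrix with two ones per line to another matrix with two ones per line. Therefore every matrix allowed in the hypothesis — a fundamental form or a $D_8$-image of one — is a form, and by Theorem~3 it is one of the 16.

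The second step is the computation, carried out simultaneously for an arbitrary line $\ell$ (row, column, or diagonal). Since each $A_k$ is a form, $\sum_{(i,j)\in\ell}(A_k)_{ij}=2$ for $k=1,2,3,4$, so
$$\sum_{(i,j)\in\ell} m_{ij}=(8+4+2+1)\cdot 2=30,$$
which is independent of $\ell$; hence $M$ is an additive magic square with magic constant $30=\frac{4(4^2-1)}{2}$. It need not be \emph{normal}: when the four chosen forms are not mutually compatible the entries $m_{ij}$ fail to exhaust $\{0,1,\dots,15\}$, which is precisely why the statement claims only ``additive magic square''. For the multiplicative correspondent, along the same line,
$$\prod_{(i,j)\in\ell} m'_{ij}=\prod_{k=1}^{4}p_k^{\,\sum_{(i,j)\in\ell}(A_k)_{ij}}=\prod_{k=1}^{4}p_k^{\,2}=(p_1p_2p_3p_4)^{2}=k^{2},$$
again independent of $\ell$, so $M'$ is a multiplicative magic square with magic constant $k^2$, and $M$ corresponds to it via $f$.

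I do not expect a genuine obstacle here. The only point that needs care is the bookkeeping in the first step, namely checking that the action of $D_8$ really preserves the ``two ones in every line'' property (in particular that a symmetry never moves a diagonal entry onto a position lying on no line). Once this is settled, both magic properties drop out instantly from the linearity of the line sum and the multiplicativity of the line product, uniformly over every choice of the four forms, so no inspection of the individual quadruples is required.
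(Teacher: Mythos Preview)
Your argument is correct. The paper itself states this theorem without proof: it is asserted immediately after the list of fundamental forms, with only the sentence ``Combining any four forms, as we have done in the previous example, the resulting matrix is always an additive magic square, not necessarily \emph{normal}, and it corresponds to a multiplicative one'' by way of justification. Your proposal makes explicit exactly the reasoning the paper leaves implicit, namely that every $D_8$-image of a fundamental form is still a form (two ones on every line), and then that linearity of the line sum and multiplicativity of the line product turn the constant ``two ones per line'' into the magic constants $30$ and $k^2$ respectively. There is nothing to compare: you have supplied the missing proof along the only natural route.
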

Given a magic square $8A_1+4A_2+2A_3+A_4$ constructed using our forms, we can consider all the permutations of the forms $A_1$, $A_2$, $A_3$, $A_4$. After a permutation, we have still a magic square. Thus we can use the group $S_4$  over our forms.
\begin{es}
We consider
$$8\begin{pmatrix}  0 & 0 & 1 & 1 \cr 1 & 1 & 0 & 0 \cr 0 & 0 & 1 & 1 \cr 1 & 1 & 0 & 0 \end{pmatrix}+4\begin{pmatrix}  1 & 0 & 1 & 0 \cr 1 & 0 & 1 & 0 \cr 0 & 1 & 0 & 1 \cr 0 & 1 & 0 & 1 \end{pmatrix}+2\begin{pmatrix}  0 & 1 & 1 & 0 \cr 0 & 1 & 1 & 0 \cr 1 & 0 & 0 & 1 \cr 1 & 0 & 0 & 1 \end{pmatrix}+\begin{pmatrix}  1 & 1 & 0 & 0 \cr 0 & 0 & 1 & 1 \cr 0 & 0 & 1 & 1 \cr 1 & 1 & 0 & 0 \end{pmatrix}$$
which corresponds to
$$\begin{pmatrix}  5 & 3 & 14 & 8 \cr 12 & 10 & 7 & 1 \cr 2 & 4 & 9 & 15 \cr 11 & 13 & 0 & 6 \end{pmatrix}.$$
Permuting the positions of the forms, we obtain
$$8\begin{pmatrix}  1 & 1 & 0 & 0 \cr 0 & 0 & 1 & 1 \cr 0 & 0 & 1 & 1 \cr 1 & 1 & 0 & 0 \end{pmatrix}+4\begin{pmatrix}  0 & 0 & 1 & 1 \cr 1 & 1 & 0 & 0 \cr 0 & 0 & 1 & 1 \cr 1 & 1 & 0 & 0 \end{pmatrix}+2\begin{pmatrix}  1 & 0 & 1 & 0 \cr 1 & 0 & 1 & 0 \cr 0 & 1 & 0 & 1 \cr 0 & 1 & 0 & 1 \end{pmatrix}+\begin{pmatrix}  0 & 1 & 1 & 0 \cr 0 & 1 & 1 & 0 \cr 1 & 0 & 0 & 1 \cr 1 & 0 & 0 & 1 \end{pmatrix}$$
which is equal to
$$\begin{pmatrix}  10 & 9 & 7 & 4 \cr 6 & 5 & 11 & 8 \cr 1 & 2 & 12 & 15 \cr 13 & 14 & 0 & 3 \end{pmatrix}.$$
\end{es}
We have seen that using our forms we obtain always an additive magic square but it is not necessarily \emph{normal}. Finally, let us see how to utilize the forms in order to generate all and only the normal additive magic squares corresponding to normal multiplicative magic squares.
The orbit of the fundamental forms with respect to the action of $D_8$ are
\begin{eqnarray*}
A&=&\{A_0,A_1\}\\ 
B&=&\{B_0,B_1,B_2,B_3\}\\
C&=&\{C_0,C_1,C_2,C_3\}\\
D&=&\{D_0,D_1,D_2,D_3\}\\
E&=&\{E_0,E_1\}.
\end{eqnarray*}

We easily observe that 
$$A_0+A_1=E_0+E_1=\mathbb{U}=\begin{pmatrix} 1 & 1 & 1 & 1 \cr 1 & 1 & 1 & 1 \cr 1 & 1 & 1 & 1 \cr 1 & 1 & 1 & 1 \end{pmatrix}.$$
Furthermore, for any form in the orbits $B$, $C$, $D$ there is another form, in the same orbit, such that their sum is $\mathbb U$.

We call \emph{class} the set  $(A,B,C,D)$, whose elements are all the magic square obtained combining the forms belonging to the orbits $A$, $B$, $C$, $D$ (e.g., $8A_0+4B_1+2C_2+D_3$ or $8B_2+4C_1+2D_0+A_0$ are elements of $(A,B,C,D)$). In the next theorem we show all the classes which provide normal additive magic squares. 

\begin{oss}
\textit{We obtain normal additive magic squares, which corresponds to all the normal multiplicative magic squares, only from the classes
\begin{eqnarray*}
(A, C, D, E)\\
(B, B, C, C)\\
(B, B, C, D)\\
(B, B, D, D)\\
(B, C, C, D)\\
(B, C, D, D)\\
(C, C, D, D)&.
\end{eqnarray*}}
We have to made clear some details: 
\begin{enumerate}
\item when we choose a form in the orbit $C$, the forms available in the orbit $D$ are only two and vice versa, except for the class $(C,C,D,D)$;
\item when we have a class with two forms from the same orbits, their sum must not be $\mathbb U$;
\item we can not take two times the same form in the same class.
\end{enumerate}
Considering this remarks, we can count the magic squares obtainable from these classes and we check that they are exactly 4224.\\
For the class $(A,C,D,E)$ we can choose 2 forms from the orbit $A$, 4 from the orbit $C$, only 2 from the orbit $D$ and 2 from $E$. Thus from this class we can obtain 32 normal additive magic squares, unless of permutations. Thus we have  $32\cdot24=768$ normal additive magic squares from $(A,C,D,E)$, and we write
$$\lvert (A,C,D,E) \rvert=768.$$
Similarly we find
$$
\begin{array}{cllllll}
 \left|(B,B,C,C)\right|&=&(4\cdot2\cdot4\cdot2)& \cdot&6&=&384\\
 \left|(B,B,C,D)\right|&=&(4\cdot2\cdot4\cdot2)&\cdot&12&=&768\\
 \left|(B,B,D,D)\right|&=&(4\cdot2\cdot4\cdot2)&\cdot&6&=&384\\
 \left|(B,C,C,D)\right|&=&(4\cdot4\cdot2\cdot2)&\cdot&12&=&768\\
 \left|(B,C,D,D)\right|&=&(4\cdot2\cdot4\cdot2)&\cdot&12&=&768\\
 \left|(C,C,D,D)\right|&=&(4\cdot2\cdot4\cdot2)&\cdot&6&=&384\\
\end{array}
$$
and 
$$768 + 384 + 768 + 384 + 768 + 768 + 384=4224.$$
\end{oss}
\begin{oss}
All the magic squares that can be represented through our notation can be classified and identified by the membership class.
\end{oss}
\begin{oss}
Such representation allows to construct in a simple way all the normal multiplicative magic squares of order 4.
\end{oss}
We conclude this paper with a further example.
\begin{es}
Let us consider the factorization $2\cdot3\cdot5\cdot67$ of $2010$. We take a magic square in $\mathbb M_{2010,4}$
$$\begin{pmatrix} 2010 & 5 & 67 & 6 \cr 3 & 134 & 10 & 1005 \cr 2 & 201 & 15 & 670 \cr 335 & 30 & 402 & 1 \end{pmatrix}=\begin{pmatrix} 2\cdot3\cdot5\cdot67 & 5 & 67 & 2\cdot3 \cr 3 & 2\cdot67 & 2\cdot5 & 3\cdot5\cdot67 \cr 2 & 3\cdot67 & 3\cdot5 & 2\cdot5\cdot67 \cr 5\cdot67 & 2\cdot3\cdot5 & 2\cdot3\cdot67 & 1 \end{pmatrix}.$$
its image through $f$ is 
$$\begin{pmatrix} 1111 & 0010 & 0001 & 1100 \cr 0100 & 1001 & 1010 & 0111 \cr 1000 & 0101 & 0110 & 1011 \cr 0011 & 1110 & 1101 & 0000 \end{pmatrix}=\begin{pmatrix} 15 & 2 & 1 & 12 \cr 4 & 9 & 10 & 7 \cr 8 & 5 & 6 & 11 \cr 3 & 14 & 13 & 0 \end{pmatrix}\ .$$
This square belongs to the class $(C,C,D,D)$, in fact it can be decomposed as follows
$$8\begin{pmatrix} 1 & 0 & 0 & 1 \cr 0 & 1 & 1 & 0 \cr 1 & 0 & 0 & 1 \cr 0 & 1 & 1 & 0 \end{pmatrix}+4\begin{pmatrix} 1 & 0 & 0 & 1 \cr 1 & 0 & 0 & 1 \cr 0 & 1 & 1 & 0 \cr 0 & 1 & 1 & 0 \end{pmatrix}+2\begin{pmatrix} 1 & 1 & 0 & 0 \cr 0 & 0 & 1 & 1 \cr 0 & 0 & 1 & 1 \cr 1 & 1 & 0 & 0 \end{pmatrix}+\begin{pmatrix} 1 & 0 & 1 & 0 \cr 0 & 1 & 0 & 1 \cr 0 & 1 & 0 & 1 \cr 1 & 0 & 1 & 0 \end{pmatrix}.$$
\end{es}

\end{document}